\documentclass[12pt,a4paper]{amsart}
\usepackage{amstext}
\usepackage{amsthm}
\usepackage{amsopn}
\usepackage{amsfonts}
\usepackage{amsmath}
\usepackage{amssymb}
\usepackage{amscd}

\addtolength{\textwidth}{0.8in} \addtolength{\oddsidemargin}{-0.6in}
\addtolength{\evensidemargin}{-0.6in}
\addtolength{\textheight}{0.3in} \addtolength{\topmargin}{-.1in}
\addtolength{\headheight}{3pt}

\parindent=8mm

% MATH -----------------------------------------------------------

\newcommand{\all}[2]{%
\left\{\, {#1}\vphantom{{#1}^k{#2}^k}\,\right |%
\left. \vphantom{{#1}^k{#2}^k} {#2}\, \right\}}

\newcommand{\norm}[1]{\left\Vert#1\right\Vert}
\newcommand{\abs}[1]{\left\vert#1\right\vert}
\newcommand{\set}[1]{\left\{#1\right\}}

% *********** numbers and sets **************
\newcommand{\N}{\ensuremath{\mathbb N}}
\newcommand{\R}{\ensuremath{\mathbb R}}

\newcommand{\C}{\ensuremath{\mathbb C}}

\newcommand{\lam}{\lambda}

% ****** Theorems **************************
\theoremstyle{plain}
\newtheorem{theorem}{Theorem}[section]
\newtheorem{prop}[theorem]{Proposition}
\newtheorem{lem}[theorem]{Lemma}
\newtheorem{coro}[theorem]{Corollary}

\theoremstyle{definition}
\newtheorem{defn}[theorem]{Definition}
\newtheorem{rem}[theorem]{Remark}
\newtheorem{ex}[theorem]{Example}
\newtheorem{notation}[theorem]{Notation}

\newcommand{\cal}{\mathcal}

\newcommand{\Si}{{\cal S}_\infty}

\newcommand{\cc}{\subset\!\!\!\subset}

\begin{document}

\title[Eigenvalue decay]
{Eigenvalue decay of
operators on harmonic function spaces}
\author[O.F. ~Bandtlow and C.-H.~Chu]
{Oscar F.~Bandtlow and Cho-Ho Chu}

\address{Oscar F.~Bandtlow,
School of Mathematical Sciences, Queen Mary, University
of London, London E1 4NS, UK}

\address{Cho-Ho Chu, 
School of Mathematical Sciences, Queen Mary, University
of London, London E1 4NS, UK}

\date{February 26, 2009}
\keywords{Bounded operator, harmonic function space, eigenvalue
asymptotic.}

\subjclass{47B38, 47B07, 47B06, 46E10, 31B05}

\begin{abstract}
Let $\Omega$ be an open set in $\R^d$ $(d > 1)$ and $h(\Omega)$ the
Fr\'{e}chet space of harmonic functions on  $\Omega$. Given a
bounded linear operator \mbox{$L :h(\Omega)\to h(\Omega)$},
we show that its eigenvalues $\lambda_n$, arranged in decreasing order
and 
counting multiplicities,
satisfy $|\lambda_n|\leq K\exp(-cn^{1/(d-1)})$, where $K$ and $c$ are
two explicitly
computable positive constants.
\end{abstract}

\maketitle

\section{Introduction}

In his celebrated memoirs \cite{Gro}, Grothendieck showed that the
eigenvalues, 
ordered by magnitude and counting algebraic multiplicities, 
of every bounded operator on a
quasi-complete nuclear space decrease {\it rapidly} \cite[Chap II,
\S 2, No.~4, Corollaire 3]{Gro}. He also remarked in \cite[Chap
II, \S 2, No.~4, Remarque 9]{Gro} that this result could be
improved for certain spaces including the space ${\cal H}(\Omega)$
of holomorphic functions on a domain $\Omega$ in $\C^d$. Indeed,
he showed that the eigenvalues $$|\lambda_1| \geq |\lambda_2| \geq
\cdots \geq |\lambda_n| \geq \cdots$$ of a bounded operator on
${\cal H}(\Omega)$ satisfy 
\begin{equation} 
\label{decayest}
\lambda_n = O(\exp (-cn^{1/d}))
\end{equation}
for
some positive constant $c$. We note that Grothendieck originally
asserted that $\lam_n =O(\exp(-c n))$ although his arguments
actually yield the above dimension-dependent decay (see \cite[Appendix
A]{BJ} for a short alternative proof of (\ref{decayest})). 

In this paper, we show that a decay of this type also occurs for the
eigenvalues of bounded operators on the space $h(\Omega)$ of
harmonic functions on a non-empty open set $\Omega$ in $\R^d$ $(d >
1)$. More precisely, we shall show that there are positive constants
$K$ and $c$, such that
\begin{equation}
\label{eq:decay}
|\lambda_n|\leq K \exp(-cn^{1/(d-1)})\,.
\end{equation}
The main ingredient in our proof is to show, by using properties of
spherical harmonics, that the singular values of certain natural
embeddings of harmonic Bergman spaces on balls decay at a stretched
exponential rate (see Proposition~\ref{berg:prop5}, which, in fact,
yields the precise asymptotics of the singular value decay of these
embeddings). After extending this result to embeddings associated
with more general open sets (see Theorem~\ref{embeddingtheorem})
some abstract operator theoretic techniques discussed in
Section~\ref{expoclasses} then yield the main result in
Theorem~\ref{maintheorem}. This method also yields explicitly
computable estimates for the constants $K$ and $c$ occurring in
(\ref{eq:decay}), which will be illustrated by a concrete example at
the end of this article. 

\begin{notation} Let $H_1$ and $H_2$ be Hilbert spaces.
Throughout, we use ${\cal L}(H_1,H_2)$ to denote the Banach space
of bounded linear operators from $H_1$ to $H_2$, equipped with the
usual norm, and $\Si(H_1,H_2)\subset {\cal L}(H_1,H_2)$ to denote
the closed subspace of compact operators from $H_1$ to $H_2$. We
shall often write $\cal L$ or $\Si$ if the Hilbert spaces $H_1$
and $H_2$ are understood.

For $A\in\Si(H,H)$ we let
$\lambda(A)=\set{\lam_n(A)}_{n=1}^\infty$ denote the sequence of
eigenvalues of $A$, each eigenvalue repeated according to its
algebraic multiplicity, and ordered by magnitude, so that
$\abs{\lam_1(A)}\geq\abs{\lam_2(A)}\geq\ldots$. We also write
$|\lambda(A)|$ for the sequence $\set{|\lam_n(A)|}_{n=1}^\infty$.

Similarly, for $A\in\Si(H_1,H_2)$, we use
$s(A)=\set{s_n(A)}_{n=1}^\infty$, where
\[s_n(A)= \sqrt{\lambda_n(A^*A)}\quad (n\in\N)\,,\]
to denote the
sequence of \textit{singular values} of $A$.
\end{notation}

\section{Exponential classes}
\label{expoclasses}
In this section we consider classes of compact operators whose singular values
decay exponentially of a particular order and list some of their
properties. We start by introducing
exponential classes of complex sequences. Let $a>0$ and
$\alpha>0$. We define $$  {\cal E} (a,\alpha):= \all{x\in \C^\N}{
\sup_{n\in\N}
  |x_n| \exp(an^{\alpha}) < \infty}. $$
Then ${\cal E}(a,\alpha)$ is a complex Banach space  with norm
$$|x|_{a,\alpha}:= \sup_{n\in\N}
  |x_n| \exp(an^{\alpha})\,,$$
which we call {\it exponential class of sequences of type}
$(a,\alpha)$. The set
\[ {\cal E} (\alpha):=\bigcup_{a>0}{\cal E}(a,\alpha) \]
will be referred to as {\it exponential class of sequences of type}
$\alpha$.

\begin{defn} Let $H_1$ and $H_2$ be Hilbert spaces, which, to avoid
  trivialities, we assume to be infinite dimensional.
For $a, \alpha > 0$, we define

\[ E(a,\alpha;H_1,H_2):=\all{A\in\Si(H_1,H_2)}{ |A|_{a,\alpha}:=\sup_{n\in\N}
  s_n(A)\exp(an^{\alpha}) < \infty}\,, \]
which is called {\it  exponential class of operators of type}
$(a, \alpha)$. We refer to the number $|A|_{a,\alpha}$ as {\it
$(a,\alpha)$-gauge} or simply {\it
  gauge} of $A$. The set
\[ E(\alpha;H_1,H_2):=\bigcup_{a >0}E(a,\alpha;H_1,H_2) \]
is called the {\it  exponential class of operators of type} $
\alpha$. It consists of compact operators between $H_1$ and $H_2$
whose singular values  decay at a stretched exponential
rate with stretching exponent $\alpha$.
\end{defn}
Whenever the Hilbert spaces are clear from the context, we
suppress reference to them and simply write $E(a,\alpha)$ instead
of $E(a,\alpha;H_1,H_2)$ and similarly for $E(\alpha)$.

We now collect a number of properties of the classes $E(a,\alpha)$
which will be used later.

\begin{prop} \label{prop2}
Let  $\alpha, a, a_1,\ldots, a_N >0$.
  \begin{itemize}
  \item[(i)] If $A ,C\in \cal L$ and $B\in E(a,\alpha)$, then $$ABC
  \in E(a, \alpha) \quad and \quad
 |ABC|_{a,\alpha}\leq \norm{A}{}\, |B|_{a,\alpha}\, \norm{C}{}.
$$
\item[(ii)] Let $A_n\in E(a_n,\alpha)$ for $1\leq n\leq
  N$ and let
  $A =\sum_{n=1}^N A_n$. Then
\[ A \in E(a',\alpha)\text{ with } |A|_{a',\alpha}\leq
  N \max_{1\leq n\leq N}|A_n|_{a_n,\alpha} \]
  where $a':=(\sum_{n=1}^Ka_n^{-1/\alpha})^{-\alpha}$.
In particular
\[ E(a_1,\alpha)+\cdots + E(a_N,\alpha)\subset E(a',\alpha) \]
and the above inclusion is sharp in the sense that
\[ E(a_1,\alpha)+\cdots + E(a_N,\alpha)\not\subset E(b,\alpha) \]
for $b>a'$
\item[(iii)] If $A\in E(a,\alpha)$, then
\[ \lam(A)\in {\cal E}(a/(1+\alpha),\alpha) \text{ with }
|\lam(A)|_{a/(1+\alpha),\alpha}\leq |A|_{a,\alpha}. \]
The result is sharp in the sense that there is an operator $A\in
E(a,\alpha)$ such that $\lam(A)\not\in {\cal E}(b,\alpha)$ whenever
$b>a/(1+\alpha)$.
\end{itemize}
\end{prop}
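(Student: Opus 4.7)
Part (i) is immediate from the ideal property $s_n(ABC)\le \norm{A}\,s_n(B)\,\norm{C}$ of singular values: multiplying by $\exp(an^\alpha)$ and taking the supremum over $n\in\N$ yields the gauge estimate.

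For (ii), my plan is to apply the Ky Fan singular value inequality
\[ s_{n_1+\cdots+n_N-(N-1)}(A_1+\cdots+A_N) \le \sum_{i=1}^N s_{n_i}(A_i), \]
and to allocate the $n_i$ in proportions $a_i^{-1/\alpha}/\sum_j a_j^{-1/\alpha}$, chosen precisely so that the exponents $a_i n_i^\alpha$ are asymptotically equal. Substituting $s_{n_i}(A_i)\le |A_i|_{a_i,\alpha}\exp(-a_i n_i^\alpha)$ then produces a bound of the claimed form $N\max_i|A_i|_{a_i,\alpha}\exp(-a' n^\alpha)$. For sharpness, I would take the $A_i$ to be diagonal operators on mutually orthogonal subspaces with $s_j(A_i)=\exp(-a_i j^\alpha)$; a counting argument identifying the $n$th decreasingly rearranged value $e^{-a_i j^\alpha}$ across all pairs $(i,j)$ shows that $s_n$ of the sum is of order $\exp(-a' n^\alpha)$, so no larger exponent is admissible.

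The substantive content lies in (iii). Combining the Weyl multiplicative inequality $\prod_{k=1}^n|\lam_k(A)|\le\prod_{k=1}^n s_k(A)$ with the monotonicity $|\lam_n(A)|^n\le\prod_{k=1}^n|\lam_k(A)|$ and the pointwise bound $s_k(A)\le|A|_{a,\alpha}\exp(-ak^\alpha)$ gives
\[ |\lam_n(A)|^n \le |A|_{a,\alpha}^n\exp\Bigl(-a\sum_{k=1}^n k^\alpha\Bigr). \]
The elementary estimate $\sum_{k=1}^n k^\alpha\ge\int_0^n x^\alpha\,dx=n^{\alpha+1}/(1+\alpha)$, followed by extraction of $n$th roots, yields $|\lam_n(A)|\le|A|_{a,\alpha}\exp(-an^\alpha/(1+\alpha))$, which is the required gauge bound for $\lam(A)$ in $\cal E(a/(1+\alpha),\alpha)$.

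The main obstacle is the sharpness claim in (iii), which requires an explicit example saturating Weyl's bound. My plan is to take $A=\bigoplus_k B_k$ as a direct sum of finite-dimensional blocks $B_k=P_k D_k$, where $P_k$ is the cyclic permutation on $\C^{n_k}$ and $D_k$ is diagonal; then the singular values of $B_k$ coincide with the diagonal of $D_k$, while every eigenvalue of $B_k$ has modulus equal to the geometric mean of those entries, so each block internally saturates Weyl's inequality. By distributing the target values $\exp(-ak^\alpha)$ across consecutive blocks whose sizes $n_k$ grow sufficiently rapidly, the block-wise geometric means become asymptotically comparable to $\exp(-an^\alpha/(1+\alpha))$; since each such modulus appears in the eigenvalue sequence of $A$ with multiplicity $n_k$, the decay of $|\lam(A)|$ is forced to saturate the Weyl bound, ruling out any exponent exceeding $a/(1+\alpha)$.
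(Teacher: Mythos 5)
The paper itself gives no proof of this proposition: it is discharged by a one-line citation to \cite[Prop.~2.5, 2.8, and 2.10]{Ban}. Your plan reconstructs a direct proof, and the tools you invoke are the right ones: the ideal property of singular values for (i), the additive Horn--Ky Fan inequality $s_{n_1+\cdots+n_N-(N-1)}(\sum A_i)\le\sum s_{n_i}(A_i)$ with a proportional allocation $n_i\propto a_i^{-1/\alpha}$ for (ii), and Weyl's multiplicative inequality combined with $|\lam_n|^n\le\prod_{k\le n}|\lam_k|$ and $\sum_{k\le n}k^\alpha\ge n^{\alpha+1}/(\alpha+1)$ for (iii). For (ii) the allocation should be made with ceilings, $n_i=\lceil n\,a_i^{-1/\alpha}/\sum_j a_j^{-1/\alpha}\rceil$, so that $a_in_i^\alpha\ge a'n^\alpha$ holds for every $i$ while $\sum n_i\le n+N-1$; this is a small but necessary refinement of "asymptotically equal." The sharpness examples — diagonal operators on mutually orthogonal subspaces for (ii), and direct sums of permutation-times-diagonal blocks saturating Weyl's inequality for (iii) — are the standard constructions and work, though in (iii) the rate at which block sizes must grow deserves to be made explicit (e.g.\ blocks of size $\sim 2^k$, so that each block's geometric mean is comparable to $\exp(-an^\alpha/(1+\alpha))$ uniformly across the block). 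In short: your route is genuinely a proof, supplying the argument that the paper delegates to the reference, and it is almost certainly the same line of argument as in \cite{Ban}; the only gaps are bookkeeping details in the rounding for (ii) and the block-growth rate in the sharpness example for (iii).
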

\begin{proof}
 See \cite[Prop.~2.5, 2.8, and 2.10]{Ban}.
\end{proof}
\begin{rem}\label{rem2}
Note that $E(a,\alpha;H_1,H_2)$ is not a linear space. In order to see
this assume for simplicity that $H_1=H_2=\ell^2$. Let 
$\sigma_n=\exp(-an^\alpha)$ and let $A$ and $B$ be the diagonal
operators $A=diag(\sigma_1,0,\sigma_2,0,\ldots)$ and 
$B=diag(0,\sigma_1,0,\sigma_2,\ldots)$. Then $A,B\in E(a,\alpha)$ with 
$|A|_{a,\alpha}= |B|_{a,\alpha}=1$ but 
$A+B=diag(\sigma_1,\sigma_1,\sigma_2,\sigma_2,\ldots)\not \in
E(a,\alpha)$. Essentially the same construction can be used to deal
with the case of 
arbitrary (infinite-dimensional) spaces $H_1$ and $H_2$. 

The fact that $E(a,\alpha)$ is not a linear space also follows from 
assertion (ii) of the previous proposition,  
which implies that $E(a,\alpha)+E(a,\alpha)\subset
 E(2^{-\alpha}a,\alpha)$, but $E(a,\alpha)+E(a,\alpha)\not\subset
 E(a,\alpha)$, because $2^{-\alpha}a<a$.
\end{rem}

\section{Harmonic Bergman spaces and canonical identifications}
\label{berg}
To pave the way for the main result, our objective in
this section will be to show that certain natural embeddings of harmonic
Bergman spaces have singular values which decay at a stretched
exponential rate.

In the sequel, all open sets in $\R^d$ are non-empty. Let
$\Omega\subset \R^d$ be an open set and let $L^2(\Omega)$ be the
Lebesgue space of complex-valued, square-integrable functions on
$\Omega$ with
respect to Lebesgue measure $dx$ on $\R^d$, equipped with the usual
norm.
Let $\Delta$ be the Laplace operator and
let
\[ h^2(\Omega):=\all{ f \in L^2(\Omega)}%
{ \Delta f = 0} \] be the {\it  harmonic Bergman space} over
$\Omega$, which  is a separable Hilbert space with inner product
\[ (f,g)_{h^2(\Omega)}=\int_\Omega f(x) \overline{g(x)} \, dx \quad
(f,g\in h^2(\Omega))\,. \] We refer to \cite{ABR} for more details
about $h^2(\Omega)$.

Suppose that $\Omega_1 , \Omega_2\subset \R^d$ are open and that
$\Omega_2\subset \Omega_1$. By restriction to $\Omega_2$ every
element in $h^2(\Omega_1)$ can also be considered as an element of
$h^2(\Omega_2)$. This restriction yields a linear transformation
$J: h^2(\Omega_1)\to h^2(\Omega_2)$, called the
\textit{canonical identification}. If $\Omega_1$ is connected,
then the canonical identification is injective and hence a proper
embedding of $h^2(\Omega_1)$ in $h^2(\Omega_2)$.

It is obvious that $J$ is continuous.
Under stronger assumptions about $\Omega_1$ and $\Omega_2$,
more can be said about $J$. We require the following definition.

\begin{defn}
Let $\Omega_1, \Omega_2$ be open subsets of $\R^d$. If
$\overline{\Omega}_2$ is a compact subset of $\Omega_1$ then we
say that $\Omega_2$ is \emph{compactly contained} in $\Omega_1$,
and write $\Omega_2 \cc \Omega_1$.
\end{defn}

It turns out that if $\Omega_2\cc\Omega_1$ then
$J:h^2(\Omega_1)\hookrightarrow h^2(\Omega_2)$ is a
compact operator.\footnote{To see this, note that
$J(h^2(\Omega_1))$ is contained in the
Banach space $C^b(\Omega_2)$ of bounded continuous functions on
$\Omega_2$ and $J:h^2(\Omega_1) \to C^b(\Omega_2)$ has
closed graph. Hence $\{Jf: \|f\|_2 \leq 1\}$ is uniformly bounded
on $\Omega_2$ and therefore a normal family in $h^2(\Omega_2)$
(cf. \cite[Theorem 2.6]{ABR}).} In fact rather more is true: $J\in
E(1/(d-1))$. The proof of this result requires a certain amount of
preparation and will be presented in the next section.

In this section we shall be content with proving this result for
the case where $\Omega_1$ and $\Omega_2$ are concentric balls, in
which case the rate of decay can be identified precisely.

\begin{notation}
We denote by $B_{r,x}$ the ball with radius $r$ centred at $x$,
with respect to the Euclidean metric. We use $B_r$ as a short-hand
for  $B_{r,0}$. Given  a ball $B =B_{r,x}$ and $\gamma>0$, we use
the symbol
\[ B(\gamma):=B_{\gamma r,x} \]
for the $\gamma$-dilation of the ball.
\end{notation}

As usual, given $x= (x_1, \ldots, x_d) \in \R^d$ and a multi-index
$\alpha = (\alpha_1, \ldots, \alpha_d)$, we define $$x^\alpha =
x_1^{\alpha_1} \cdots x_d^{\alpha_d} \quad {\rm and}\quad |\alpha|
= \alpha_1 + \cdots +\alpha_d. $$ A {\it homogeneous harmonic
polynomial} of degree $k$ in $d$ dimensions is a polynomial $p :
\R^d \to \C$ of the form $$p(x) = \sum_{|\alpha|=k}
c_\alpha x^\alpha \qquad (x \in \R^d)\,,$$ which is also a harmonic
function. The restriction of $p$ to the unit sphere $S$ in $\R^d$
is called a {\it spherical harmonic} of degree $k$. The number
$N_d(k)$ of linearly independent spherical harmonics of degree $k$
in $d$ dimensions is given by the power series
$$\frac{1+x}{(1-x)^{d-1}} = \sum_{k=0}^\infty N_d(k)x^k \qquad
(|x| < 1)$$ (see \cite[Lemma 3]{m}). Hence
\begin{equation*}
N_d(k) =
{k+d-1\choose d-1}-{k+d-3 \choose d-1}
\end{equation*}
(see also \cite[Proposition 5.17]{ABR}). We now define $h_d(k) =
N_d(0) + \cdots + N_d(k)$, which is the number of linearly
independent homogeneous harmonic polynomials of degree {\it at
most} $k$ in $d$ dimensions. It follows that
\[ h_d(k) = \sum_{l=0}^k\left ({l+d-1\choose
d-1}-{l+d-3\choose d-1}\right ) = {k+d \choose d}-{k+d-2\choose
d}.\]
We note that $h_d(0) = 1$ and define $h_d(-1) = 0$.

\begin{prop}\label{berg:prop5}
 Let $B\subset \R^d$ be a ball and $\gamma>1$. Then
the singular values of the canonical identification
\[ J:h^2(B(\gamma))\hookrightarrow h^2(B) \]
are given by
\[ s_n(J)=\gamma^{-(k+\frac{d}{2})}\]
for $h_d(k-1) < n \leq h_d(k)$ and $k\in\N \cup\{0\}$.
\end{prop}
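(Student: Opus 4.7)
The plan is to exhibit an explicit singular value decomposition of $J$ coming from a natural orthonormal basis of $h^2$ on a ball built out of spherical harmonics.

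First, by translation invariance of Lebesgue measure, we may assume $B = B_r$ is centred at the origin, so that $B(\gamma) = B_{\gamma r}$. Fix, for each $k \in \N\cup\{0\}$, an $L^2(S)$-orthonormal basis $\{Y_{k,j}\}_{j=1}^{N_d(k)}$ of the spherical harmonics of degree $k$, and set $p_{k,j}(x) = |x|^k Y_{k,j}(x/|x|)$; these are the homogeneous harmonic polynomials of degree $k$ corresponding to the $Y_{k,j}$. The first main step is to verify that $\{p_{k,j}\}$ is a complete orthogonal system in $h^2(B_\rho)$ for every $\rho>0$. Orthogonality follows by integrating in polar coordinates: if $k \neq k'$,
\[
\int_{B_\rho} p_{k,j}(x)\,\overline{p_{k',j'}(x)}\,dx
= \int_0^\rho t^{k+k'+d-1}\,dt \int_S Y_{k,j}(\xi)\overline{Y_{k',j'}(\xi)}\,d\sigma(\xi),
\]
and the sphere integral vanishes by orthogonality of spherical harmonics of distinct degrees; for $(k,j)\neq(k',j')$ with $k=k'$ the second factor still vanishes. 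Completeness follows from the fact that any $f\in h^2(B_\rho)$ has a uniformly convergent expansion on compacta into homogeneous harmonic polynomials (its homogeneous expansion at the origin), which the reference \cite{ABR} supplies.

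The second step is a direct norm computation in polar coordinates:
\[
\|p_{k,j}\|_{h^2(B_\rho)}^2 = \int_0^\rho t^{2k+d-1}\,dt \cdot \int_S |Y_{k,j}|^2\,d\sigma
= \frac{\rho^{2k+d}}{2k+d}.
\]
Setting $e_{k,j}^{(\rho)} := \sqrt{(2k+d)/\rho^{2k+d}}\, p_{k,j}$, we obtain an orthonormal basis of $h^2(B_\rho)$. Since $J$ is simply restriction and the $p_{k,j}$ are defined on all of $\R^d$,
\[
J\, e_{k,j}^{(\gamma r)}
= \sqrt{\tfrac{2k+d}{(\gamma r)^{2k+d}}}\,p_{k,j}
= \gamma^{-(k+d/2)}\, e_{k,j}^{(r)}.
\]
Thus $J$ sends an orthonormal basis of the domain to an orthogonal family in the codomain with respective norms $\gamma^{-(k+d/2)}$. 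This is precisely a singular value decomposition of $J$: the singular values are $\gamma^{-(k+d/2)}$ with multiplicity $N_d(k)$. Since $\gamma>1$, these are strictly decreasing in $k$, so ordering by magnitude and stacking multiplicities gives $s_n(J) = \gamma^{-(k+d/2)}$ exactly when $h_d(k-1) < n \leq h_d(k)$, as claimed.

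The only delicate point is the completeness of the homogeneous harmonic polynomials in $h^2(B_\rho)$, i.e.\ that the $(k,j)$-expansion exhausts the space; orthogonality and the norm computation are straightforward. Once completeness is granted, the rest is a one-line identification of the SVD, and no estimates beyond Fubini are required.
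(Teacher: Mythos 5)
Your proof is correct and follows essentially the same route as the paper: both diagonalise the embedding with respect to the common orthogonal system of homogeneous harmonic polynomials in $h^2(B_r)$ and $h^2(B_{\gamma r})$, compute in polar coordinates, and read off the multiplicities $N_d(k)$. The only cosmetic difference is that you exhibit the singular value decomposition directly by normalising the $p_{k,j}$ on each ball and observing $J\,e^{(\gamma r)}_{k,j} = \gamma^{-(k+d/2)} e^{(r)}_{k,j}$, whereas the paper computes $J^*J$ and takes square roots; and your appeal to the homogeneous expansion for completeness should really cite the density result \cite[Lemma 8.8]{ABR} as the paper does, since uniform convergence on compacta does not by itself give completeness in $h^2(B_\rho)$.
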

\begin{proof}
By translation invariance of the Lebesgue measure, we may assume
that $B$ is centred at the origin, say,  $B=B_r$. The proof relies
on the fact that $h^2(B_{r})$ and $h^2(B_{\gamma r})$ have a
common complete orthogonal system consisting of homogeneous
harmonic polynomials. To see this we first note that the linear
span of homogeneous harmonic polynomials is dense in $h^2(B_s)$
for every $s>0$ \cite[Lemma 8.8]{ABR}. Let now $f$ and $g$ be
homogeneous harmonic polynomials of degree $n$ and $m$
respectively. Then by the polar-coordinate formula for integration
in $\R^d$ (cf. \cite[p. 150]{rudin}), we have
\begin{align*}
(f,g)_{h^2(B_{r})} &=d\,
{\rm Vol}(B_1)\int_0^{r}\rho^{d-1}\int_Sf(\rho\xi)\overline{g(\rho\xi)}\,d\sigma(\xi)\,d\rho\\
&= d\, {\rm
Vol}(B_1)\int_0^{r}\rho^{d-1+n+m}\int_Sf(\xi)\overline{g(\xi)}\,d\sigma(\xi)\,d\rho
\end{align*}
where $S$ is the unit sphere in $\R^d$ and $\sigma$ the normalised
surface measure on $S$. Therefore
\begin{equation}\label{berg:e:calc}
(f,g)_{h^2(B_{r})}=d
{\rm Vol}(B_1)
\frac{r^{d+n+m}}{d+n+m}\int_Sf(\xi)\overline{g(\xi)}\,d\sigma(\xi).
\end{equation}
Since $\int_Sf\overline{g}\,d\sigma=0$ whenever $n\neq m$
\cite[Theorem 5.3]{ABR}, the Gram-Schmidt orthogonalisation
process yields an orthonormal basis for $h^2(B_{r})$, consisting
of homogeneous harmonic polynomials. Observe now that by
(\ref{berg:e:calc}) we have
\begin{equation}\label{berg:e:calc2}
(f,g)_{h^2(B_{\gamma r})}=\gamma^{d+n+m}(f,g)_{h^2(B_{r})}
\end{equation}
for any two homogeneous harmonic polynomials $f$ and $g$ of degree
$n$ an $m$ respectively. In particular $(f,g)_{h^2(B_{\gamma
r})}=0$ whenever $(f,g)_{h^2(B_{r})}=0$. This shows that
$h^2(B_{r})$ and $h^2(B_{\gamma r})$ have a common complete
orthogonal system consisting of homogeneous harmonic polynomials.

In order to see that the canonical identification $J:h^2(B_{\gamma
r})\hookrightarrow h^2(B_{r})$ has the desired properties, note
that by (\ref{berg:e:calc2}) we have
\[
(J^*Jf,g)_{h^2(B_{\gamma
    r})}=(Jf,Jg)_{h^2(B_{r})}=\gamma^{-(d+n+m)}(f,g)_{h^2(B_{\gamma r})}.
\]
This implies that $J^*J$ is diagonal
   with respect to the orthonormal basis of homogeneous harmonic polynomials.
   Its eigenvalues therefore belong to the set
   $\all{\gamma ^{-(2k+d)}}{k\in\N\cup \{0\}}$. Consequently, the singular values
   of $J$ belong to the set
   $\all{\gamma^{-(k+\frac{d}{2})}}{k\in\N \cup \{0\}}$.
 As there are $N_d(k)$
   linearly independent homogeneous harmonic polynomials of degree $k$, the
   value $\gamma ^{-(k+\frac{d}{2})}$ occurs with multiplicity
$N_d(k)$. If we order the orthonormal basis by degrees, then we
have $$s_n(J)=\gamma ^{-(k+\frac{d}{2})}$$ for $h_d(k-1) < n \leq
h_d(k)$.
\end{proof}
In order to study the singular value asymptotics of the canonical
identification $J$ obtained in the previous proposition, we require the
following lemma.
\begin{lem} \label{berg:lem:techlem}
Let $d\in\N$ and let $a_1, \ldots, a_d \geq 0$. Then
\begin{equation}
\label{berg:techlem}
\sup_{x\geq 0}
\prod_{k=1}^d(x+a_k)^{1/d}-x=
\lim_{x\rightarrow\infty}\prod_{k=1}^d(x+a_k)^{1/d}-x=\frac{1}{d}\sum_{k=1}^da_k.
\end{equation}
\begin{proof}
The case $d=1$ of (\ref{berg:techlem}) is clearly true, so suppose
$d\geq 2$.  Define $$h(x)=\prod_{k=1}^d(x+a_k)^{1/d}-x \qquad (x
\geq 0).$$ Then $h$ is an increasing function. Indeed, we have
\[
h'(x)=\frac{1}{d}\left (\prod_{k=1}^d(x+a_k)^{-1 + 1/d}\right
)\sum_{k=1}^d\prod_{\substack{l=1
    \\ l\neq k}}^d(x+a_l)-1 \]
where
\[ \frac{1}{d}\sum_{k=1}^d\prod_{\substack{l=1 \\ l\neq k}}^d(x+a_l)
\geq \prod_{k=1}^d(x+a_k)^{1-1/d}\,, \] since
\[ \frac{1}{d}\sum_{k=1}^d(x+a_k)^{-1}\geq \prod_{k=1}^d(x+a_k)^{-1/d} \]
by the arithmetic-geometric mean inequality. Thus $h'(x)\geq 0$ for
$x\geq 0$.

To complete the proof of (\ref{berg:techlem}), observe that
\[ \lim_{x\rightarrow \infty}h(x)=
\lim_{t \downarrow 0}t^{-1}\left ( \prod_{k=1}^{d}(1+a_kt)^{1/d}-1
\right )= \frac{1}{d}\sum_{k=1}^{d}a_k \] by
l'H\^{o}pital's rule.
\end{proof}
\end{lem}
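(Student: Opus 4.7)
The plan is to split the claim into two parts: first show that the function $h(x):=\prod_{k=1}^d(x+a_k)^{1/d}-x$ is nondecreasing on $[0,\infty)$, which forces $\sup_{x\geq 0}h(x)=\lim_{x\to\infty}h(x)$; then compute that limit explicitly and identify it as $\frac{1}{d}\sum_{k=1}^d a_k$. The case $d=1$ is immediate, since $h$ is then the constant function $a_1$, so I would assume $d\geq 2$ from the outset.

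For monotonicity, the natural step is to differentiate,
\[
h'(x)=\frac{1}{d}\left(\prod_{k=1}^d(x+a_k)\right)^{1/d-1}\sum_{k=1}^d\prod_{\substack{l=1\\ l\neq k}}^d(x+a_l)-1,
\]
and then appeal to the arithmetic--geometric mean inequality applied to the $d$ positive numbers $(x+a_k)^{-1}$ (assuming first that all $a_k>0$, or perturbing slightly), which yields $\frac{1}{d}\sum_{k=1}^d(x+a_k)^{-1}\geq\prod_{k=1}^d(x+a_k)^{-1/d}$. Multiplying through by $\prod_{k=1}^d(x+a_k)$ gives $\frac{1}{d}\sum_k\prod_{l\neq k}(x+a_l)\geq \prod_k(x+a_k)^{1-1/d}$, which on substitution into the formula for $h'(x)$ shows $h'(x)\geq 0$. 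The boundary case where some $a_k$ vanish can be recovered by a simple continuity argument.

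For the limit, the trick is to substitute $t=1/x$ and write
\[
h(x)=x\left(\prod_{k=1}^d(1+a_k/x)^{1/d}-1\right)=\frac{\prod_{k=1}^d(1+a_kt)^{1/d}-1}{t}.
\]
As $x\to\infty$, equivalently $t\downarrow 0$, this is exactly the difference quotient at $0$ of the function $F(t):=\prod_{k=1}^d(1+a_kt)^{1/d}$, which satisfies $F(0)=1$. A single application of l'H\^opital's rule, or equivalently logarithmic differentiation of $F$ at $0$, gives $F'(0)=\frac{1}{d}\sum_{k=1}^d a_k$. Combined with the monotonicity step, this yields both equalities in (\ref{berg:techlem}).

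I do not anticipate a serious obstacle: the AM--GM step is the only place requiring any thought, and even this is essentially forced once one writes out $h'(x)$; the remaining computation is routine. The argument will be short.
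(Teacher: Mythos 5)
Your proposal is correct and follows essentially the same route as the paper's proof: differentiate $h$, apply AM--GM to the numbers $(x+a_k)^{-1}$ to get $h'\geq 0$, and compute $\lim_{x\to\infty}h(x)$ via the substitution $t=1/x$ and l'H\^opital. The extra remark you make about recovering the boundary case $a_k=0$ by continuity is a reasonable small refinement, but the core argument is identical.
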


\begin{prop} \label{ballasymp}
 Let $d\geq 2$ and $B\subset \R^d$ be a ball. Given $\gamma>1$,
the canonical embedding
\[ J:h^2(B(\gamma))\hookrightarrow h^2(B) \]
satisfies
\begin{equation}
\label{berg:prop6:e3}
J\in E(c,1/(d-1)),\quad\text{where }
c=\left (\frac{(d-1)!}{2}\right)^{1/(d-1)} \log \gamma
\end{equation}
and
\begin{equation}
|J|_{c,1/(d-1)}=\gamma^{-1/2}. \label{berg:prop6:e3i}
\end{equation}
In other words, its singular value sequence $s(J)$ has the
following asymptotics:
\begin{equation}
\lim_{n\rightarrow\infty}\frac{\log\abs{\log s_n(J)}}{\log
n}=\frac{1}{d-1}\,; \label{berg:prop6:e1}
\end{equation}

\begin{equation}
\lim_{n\rightarrow\infty} \frac{ \log
s_n(J)}{n^{1/(d-1)}}= - \left(\frac{(d-1)!}{2}\right)^{1/(d-1)}\log
\gamma;
\label{berg:prop6:e2}
\end{equation}

\begin{equation}\label{berg:prop6:e4}
\sup_{n\in\N} \left ( \log s_n(J) + \left
(n\frac{(d-1)!}{2}\right )^{1/(d-1)}\log \gamma\right
)=-\frac{1}{2}\log \gamma.
\end{equation}
\end{prop}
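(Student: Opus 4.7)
The plan is to derive everything from the explicit singular value formula $s_n(J)=\gamma^{-(k+d/2)}$ (for $h_d(k-1)<n\leq h_d(k)$) provided by Proposition~\ref{berg:prop5}, combined with the technical Lemma~\ref{berg:lem:techlem}. I would first prove the sup-formula \eqref{berg:prop6:e4}, then deduce the gauge identity \eqref{berg:prop6:e3i} (and hence \eqref{berg:prop6:e3}) directly from it, and finally extract the two asymptotic statements \eqref{berg:prop6:e1} and \eqref{berg:prop6:e2}.

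The main algebraic step is to factor $h_d(k)$ into a form where the lemma applies. Starting from $h_d(k)=\binom{k+d}{d}-\binom{k+d-2}{d}$, pulling out the common factor $(k+d-2)!/(k!\,d!)$ gives $(k+d)(k+d-1)-k(k-1)=d(2k+d-1)$, so
\[
\frac{h_d(k)(d-1)!}{2}=\Bigl(k+\tfrac{d-1}{2}\Bigr)\prod_{j=1}^{d-2}(k+j),
\]
a product of exactly $d-1$ linear factors in $k$. Applying Lemma~\ref{berg:lem:techlem} (with $d$ replaced by $d-1$ and constants $a_0=(d-1)/2,\,a_1=1,\ldots,a_{d-2}=d-2$) yields
\[
\sup_{k\geq 0}\Bigl[\Bigl(\tfrac{h_d(k)(d-1)!}{2}\Bigr)^{1/(d-1)}-k\Bigr]=\lim_{k\to\infty}[\cdots]=\frac{1}{d-1}\Bigl(\tfrac{d-1}{2}+\tfrac{(d-2)(d-1)}{2}\Bigr)=\frac{d-1}{2},
\]
and the sup is a monotone limit (by the increasing-function part of the lemma).

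To obtain \eqref{berg:prop6:e4}, observe that on each block $h_d(k-1)<n\leq h_d(k)$ the value $\log s_n(J)=-(k+d/2)\log\gamma$ is constant while $n\mapsto(n(d-1)!/2)^{1/(d-1)}\log\gamma$ is increasing, so the supremum over such $n$ is attained at $n=h_d(k)$. The supremum over $n\in\N$ therefore equals
\[
\log\gamma\cdot\sup_{k\geq 0}\Bigl[\Bigl(\tfrac{h_d(k)(d-1)!}{2}\Bigr)^{1/(d-1)}-k-\tfrac{d}{2}\Bigr]=\log\gamma\cdot\Bigl(\tfrac{d-1}{2}-\tfrac{d}{2}\Bigr)=-\tfrac{1}{2}\log\gamma,
\]
proving \eqref{berg:prop6:e4}. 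Exponentiating gives $|J|_{c,1/(d-1)}=\gamma^{-1/2}$ with the stated $c$, i.e.\ \eqref{berg:prop6:e3i} and \eqref{berg:prop6:e3}. For \eqref{berg:prop6:e2}, the factorization above shows $h_d(k)\sim\frac{2}{(d-1)!}k^{d-1}$, hence for $n$ in the $k$-th block, $k\sim((d-1)!n/2)^{1/(d-1)}$, so
\[
\frac{\log s_n(J)}{n^{1/(d-1)}}=-\frac{(k+d/2)\log\gamma}{n^{1/(d-1)}}\longrightarrow -\Bigl(\tfrac{(d-1)!}{2}\Bigr)^{1/(d-1)}\log\gamma,
\]
and \eqref{berg:prop6:e1} is immediate from \eqref{berg:prop6:e2} by taking the logarithm of the absolute value.

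The main obstacle is the clean factorization of $h_d(k)$ that matches the hypothesis of Lemma~\ref{berg:lem:techlem} exactly; once $h_d(k)(d-1)!/2$ is written as a product of $d-1$ linear factors with non-negative shifts, the lemma delivers both the sup-formula \eqref{berg:prop6:e4} (with the sharp constant $-1/2$) and the asymptotic rate $((d-1)!/2)^{1/(d-1)}$ in one stroke, and all four claims of the proposition follow with little further work.
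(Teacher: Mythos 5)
Your proposal is correct and follows essentially the same route as the paper: the explicit factorisation $h_d(k)(d-1)!/2=\bigl(k+\tfrac{d-1}{2}\bigr)\prod_{j=1}^{d-2}(k+j)$, the observation that on each constancy block of $s_n(J)$ the supremum is attained at $n=h_d(k)$, and an application of Lemma~\ref{berg:lem:techlem} with $d$ replaced by $d-1$ to evaluate $\sup_k\bigl[(h_d(k)(d-1)!/2)^{1/(d-1)}-k-\tfrac{d}{2}\bigr]=-\tfrac12$. The only cosmetic differences are that you derive the factorisation from the binomial formula rather than quoting it, and you obtain \eqref{berg:prop6:e1} as a consequence of \eqref{berg:prop6:e2} instead of proving it by a separate squeeze argument, both of which are perfectly sound.
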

\begin{proof}
We have
\begin{equation}\label{berg:prop6:hd}
h_d(k)=\frac{2}{(d-1)!}(k+\frac{d-1}{2})\prod_{l=1}^{d-2}(k+l),
\end{equation}
where the product is interpreted to be equal to 1 if the upper range
  is strictly less than 1.
   By Proposition  \ref{berg:prop5}, we have
\begin{equation}
\frac{ \log \abs{\log \gamma^{-1}} + \log (k+\frac{d}{2})}{\log h_d(k)}\leq
\frac{ \log \abs{\log s_n(J)}}{\log n}\leq
\frac{ \log \abs{\log \gamma^{-1}} +
\log (k+\frac{d}{2})}{\log h_d(k-1)}\,. \label{berg:prop6:pe1}
\end{equation}

Using (\ref{berg:prop6:hd}), we obtain
\begin{equation}
\lim_{k\rightarrow \infty}\frac{\log (k+\frac{d}{2})}{\log h_d(k)}
=\lim_{k\to\infty}\frac{\log
(k+\frac{d}{2})}{\log h_d(k-1)}=\frac{1}{d-1}\,. \label{berg:prop6:pe2}
\end{equation}
Combining (\ref{berg:prop6:pe1}) and  (\ref{berg:prop6:pe2}), the
 assertion  (\ref{berg:prop6:e1}) follows.

Similarly, we have
\[
\frac{(k+\frac{d}{2})|\log \gamma^{-1}|}{h_d(k)^{1/(d-1)}} \leq
\frac{|\log s_n(J)|}{n^{1/(d-1)}} \leq \frac{(k+\frac{d}{2})|\log
\gamma^{-1}|}{h_d(k-1)^{1/(d-1)}}.
\]
Since
\[
\lim_{k\rightarrow
\infty}\frac{(k+\frac{d}{2})}{h_d(k)^{1/(d-1)}}=\lim_{k\to\infty}\frac{
(k+\frac{d}{2})}{h_d(k-1)^{1/(d-1)}}=\left(\frac{(d-1)!}{2}\right)^{1/(d-1)},
\]
equation (\ref{berg:prop6:e2}) follows.

It remains to establish (\ref{berg:prop6:e4}). By
Lemma~\ref{berg:lem:techlem}, we have
\begin{align*}
  \log s_n(J)+\left(n\frac{(d-1)!}{2}\right)^{1/(d-1)}\log
  \gamma &=
  \left ( \left(n\frac{(d-1)!}{2}\right )^{1/(d-1)}-(k+\frac{d}{2}) \right
  )\log \gamma\\
  &\leq  \left ( \left(h_d(k)\frac{(d-1)!}{2}\right)^{1/(d-1)}-(k+\frac{d}{2}) \right )\log \gamma\\
  & \leq \left (\frac{1}{d-1}\left ( \sum_{l=1}^{d-2}l+\frac{d-1}{2} \right
  )-\frac{d}{2}\right )\log \gamma \\
& = -\frac{1}{2}\log \gamma .
\end{align*}
 This proves
\[ \sup_{n\in\N} \left ( \log
  s_n(J) + \left (n\frac{(d-1)!}{2}\right )^{1/(d-1)}\log \gamma \right )\leq -\frac{1}{2}\log
\gamma.\]
To obtain equality we consider $s_{h_d(k)}(J)$ and again
apply Lemma (\ref{berg:lem:techlem}).

Finally, note that (\ref{berg:prop6:e4}) is simply a restatement of
(\ref{berg:prop6:e3}) and (\ref{berg:prop6:e3i}).
\end{proof}

\section{Singular values of arbitrary canonical identifications}
We shall now show how to extend Proposition \ref{ballasymp} to
identifications of
harmonic Bergman spaces on general open sets in $\R^d$. The main tool
is the following construction.

\begin{lem}\label{berg:lem3}
  Let $U,V,W\subset \R^d$ be open with $U\subset V\subset W$. Then
the operator
\[ T_U: h^2(V) \rightarrow h^2(W) \]
defined  by
\[ (T_Uf,g)_{h^2(W)}=\int_U f(x) \overline{g(x)}\,dx \]
is bounded  with $\|T_U\| \leq 1.$
\end{lem}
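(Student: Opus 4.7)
The plan is to verify that $T_U$ is well-defined and bounded via the Riesz representation theorem applied to the sesquilinear form $(f,g)\mapsto \int_U f\overline{g}\,dx$. First, for fixed $f\in h^2(V)$, I would check that the map $g\mapsto \int_U f(x)\overline{g(x)}\,dx$ is a bounded conjugate-linear functional on $h^2(W)$. Since $U\subset V$ and $U\subset W$, both restrictions $f|_U$ and $g|_U$ lie in $L^2(U)$, with
\[ \|f\|_{L^2(U)}\leq \|f\|_{L^2(V)}=\|f\|_{h^2(V)} \quad\text{and}\quad \|g\|_{L^2(U)}\leq \|g\|_{L^2(W)}=\|g\|_{h^2(W)}. \]
Applying the Cauchy--Schwarz inequality on $L^2(U)$ then gives
\[ \left|\int_U f(x)\overline{g(x)}\,dx\right|\leq \|f\|_{h^2(V)}\,\|g\|_{h^2(W)}, \]
which establishes both boundedness (with norm at most $\|f\|_{h^2(V)}$) and the trivial continuous dependence on $g$.

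Next I would invoke the Riesz representation theorem on the Hilbert space $h^2(W)$ to produce a unique element, which we denote $T_U f\in h^2(W)$, satisfying $(T_U f,g)_{h^2(W)}=\int_U f\overline{g}\,dx$ for every $g\in h^2(W)$. Taking $g=T_U f$ and using the previous bound yields $\|T_U f\|_{h^2(W)}^2\leq \|f\|_{h^2(V)}\,\|T_U f\|_{h^2(W)}$, hence $\|T_U f\|_{h^2(W)}\leq \|f\|_{h^2(V)}$. Linearity of $f\mapsto T_U f$ follows from linearity of the integral in its first argument together with uniqueness in the Riesz representation, so $T_U$ is a well-defined bounded linear operator with $\|T_U\|\leq 1$.

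There is no real obstacle here: the lemma is a routine application of Cauchy--Schwarz and Riesz representation, relying only on the chain of inclusions $U\subset V\subset W$ to ensure that each $L^2$-restriction is controlled by the corresponding Bergman-space norm. The content of the lemma is really that this construction produces a \emph{harmonic} function on $W$ as output, which is built in automatically by taking the Riesz representative inside $h^2(W)$.
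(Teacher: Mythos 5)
Your proof is correct and follows essentially the same route as the paper: the paper states the Cauchy--Schwarz bound $\bigl|\int_U f\overline{g}\,dx\bigr|\le\|f\|_{h^2(V)}\|g\|_{h^2(W)}$ and concludes immediately that $T_U$ is well-defined with $\|T_U\|\le 1$; you have merely made explicit the Riesz-representation step that the paper leaves implicit.
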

\begin{proof}
  Indeed
\[ \abs{\int_U f(x) \overline{g(x)}\,dx}^2\leq \left(\int_U
\abs{f}^2\right) \left( \int_U \abs{g}^2\right) \leq
\norm{f}_{h^2(V)}^2\, \norm{g}_{h^2(W)}^2 \]  implies that $T_U$
is well-defined and continuous with norm at most $1$.
\end{proof}

\begin{defn}
  Let $\{\Omega_n\}_{1\leq n\leq N}$  be
a finite collection of open subsets of $\R^d$. A collection
$\{\widetilde\Omega_n\}_{1\leq n\leq N}$ of mutually disjoint open
sets, with $\widetilde\Omega_n\subset \Omega_n$ for each $n$, is
called a \emph{disjointification} of $\{\Omega_n\}_{1\leq n\leq
N}$ if the symmetric difference $\left (\cup_{n=1}^N\Omega_n\right
)\triangle \left (\cup_{n=1}^N\widetilde\Omega_n\right)$ is a Lebesgue
null set.

\end{defn}
We note that if a collection $\{\Omega_n\}_{1\leq n\leq N}$ has the
property that the boundary of each $\Omega_n$ is a Lebesgue null set,
then a disjointification exists and can, for example, be obtained by setting

$$\widetilde \Omega_1  = \Omega_1, \quad
\widetilde\Omega_n={\rm int}\left( \Omega_n\setminus
\left(\bigcup_{i=1}^{n-1} \Omega_i\right)\right) \quad {\rm for}
\quad 2\le n\le N\,.
$$

The usefulness of the operator $T_U$
is due to the following result.

\begin{prop}\label{berg:prop:top}
  Let $\Omega\subset\R^d$ be open.
  Given open subsets $\Omega_1, \ldots, \Omega_N$ of
  $\Omega$, let
\[ J_n: h^2(\Omega) \to h^2(\Omega_n) \]
be the canonical identification. If
$\{\widetilde\Omega_n\}_{1\leq n\leq N}$ is a disjointification
of
$\{\Omega_n\}_{1\leq n\leq N}$,
then the
canonical identification
\[ J:h^2(\Omega)\to h^2\left(\bigcup_{n=1}^N\Omega_n\right) \]
can be written as
\[ J=\sum_{n=1}^NT_{\widetilde\Omega_n}J_n\,, \]
where
\[ T_{\widetilde\Omega_n}:h^2(\Omega_n)\rightarrow
h^2\left(\bigcup_{n=1}^N\Omega_n\right) \] is the operator defined
in Lemma \ref{berg:lem3}.
\end{prop}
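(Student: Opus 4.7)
The plan is to verify the claimed decomposition weakly, by testing both sides against an arbitrary $g\in h^2\!\left(\bigcup_{n=1}^N\Omega_n\right)$ and showing the resulting scalars agree.

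First I would compute the right-hand side. By Lemma~\ref{berg:lem3}, for each fixed $n$,
\[
(T_{\widetilde\Omega_n}J_n f,g)_{h^2(\cup_k\Omega_k)}
= \int_{\widetilde\Omega_n}(J_nf)(x)\,\overline{g(x)}\,dx
= \int_{\widetilde\Omega_n}f(x)\,\overline{g(x)}\,dx,
\]
since $\widetilde\Omega_n\subset\Omega_n$ and $J_nf$ is simply the restriction of $f$ to $\Omega_n$. Summing over $n$ and using the mutual disjointness of the sets $\widetilde\Omega_n$ gives
\[
\sum_{n=1}^N(T_{\widetilde\Omega_n}J_nf,g)_{h^2(\cup_k\Omega_k)}
= \int_{\bigcup_{n=1}^N\widetilde\Omega_n}f(x)\,\overline{g(x)}\,dx.
\]

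Next I would compute the left-hand side directly from the definition of the inner product on $h^2\!\left(\bigcup_n\Omega_n\right)$:
\[
(Jf,g)_{h^2(\cup_n\Omega_n)}=\int_{\bigcup_{n=1}^N\Omega_n}f(x)\,\overline{g(x)}\,dx.
\]
The two expressions then coincide once we know that the integrals over $\bigcup_n\widetilde\Omega_n$ and $\bigcup_n\Omega_n$ of $f\bar g$ are equal. This is exactly the content of the disjointification hypothesis: the symmetric difference $(\bigcup_n\Omega_n)\triangle(\bigcup_n\widetilde\Omega_n)$ is a Lebesgue null set, so $f\bar g$ (which lies in $L^1$ of either set by Cauchy--Schwarz, as in the proof of Lemma~\ref{berg:lem3}) has the same integral over both. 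Hence $(Jf,g)=\sum_n(T_{\widetilde\Omega_n}J_nf,g)$ for every $g\in h^2\!\left(\bigcup_n\Omega_n\right)$, and the identity $J=\sum_{n=1}^N T_{\widetilde\Omega_n}J_n$ follows.

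There is really no substantive obstacle here; the argument is a bookkeeping exercise that turns the weak definition of $T_U$ in Lemma~\ref{berg:lem3} into an additive decomposition. The only point requiring care is to realise that the disjointification assumption is used precisely to replace an integral over a (possibly overlapping) union $\bigcup_n\Omega_n$ by a sum of integrals over the disjoint pieces $\widetilde\Omega_n$ without altering the value, and that the conclusion can be read off by testing against arbitrary $g$ because the elements of $h^2\!\left(\bigcup_n\Omega_n\right)$ separate points of the Hilbert space via the inner product.
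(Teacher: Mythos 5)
Your argument is correct and is essentially the same as the paper's: both test $J f$ and $\sum_n T_{\widetilde\Omega_n}J_n f$ against an arbitrary $g\in h^2\bigl(\bigcup_n\Omega_n\bigr)$, use the defining property of $T_U$ from Lemma~\ref{berg:lem3} to turn each term into an integral over $\widetilde\Omega_n$, and then invoke disjointness and the null symmetric difference to recombine into $\int_{\cup_n\Omega_n}f\bar g\,dx=(Jf,g)$. Your write-up spells out the null-set step and the integrability of $f\bar g$ a little more explicitly than the paper, but the structure is identical.
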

\begin{proof}
  Let $f\in h^2(\Omega)$ and $g\in h^2(\bigcup_{n=1}^N\Omega_n)$. Then
\begin{align*}
  (\sum_{n=1}^NT_{\widetilde\Omega_n}J_nf,g)_{h^2\left(\bigcup_n\Omega_n\right)}
  &=\sum_{n=1}^N\int_{\widetilde\Omega_n}f(x)\overline{g(x)}\,dx\\
  &=\int_{\bigcup_n\Omega_n}f(x)\overline{g(x)}\,dx\\
  &=(Jf,g)_{h^2(\bigcup_n\Omega_n)}
\end{align*}
and the assertion follows.
\end{proof}

Before
proving the main result of this section we require some more
terminology.

\begin{defn}\label{cover}
  Let $\Omega_1,\Omega_2\subset
\R^d$ be open with $\Omega_2\cc \Omega_1$.
  Let $N\in\N$.
A finite collection
$B_1,\ldots,B_N$ of balls is called a
  {\it relative cover} of the pair
$(\Omega_1,\Omega_2)$ if the following two conditions hold:

  \begin{itemize}
  \item[(a)] $\Omega_2\subset \bigcup_{n=1}^NB_n\,$;
  \item[(b)] for each $1\le n\le N$, there exists
    $\gamma_n>1$ such that
    $\bigcup_{n=1}^N B_n(\gamma_n)\subset \Omega_1$.
 \end{itemize}
We call $N$ the {\it size} and $(\gamma_1,\ldots,\gamma_N)$ a {\it
  scaling} of the relative cover.

Given a relative cover $B_1,\ldots,B_N$ of $(\Omega_1,\Omega_2)$
with scaling $(\gamma_1,\ldots,\gamma_N)$, the vector
\[ \Gamma=(\log \gamma_1,\ldots,\log\gamma_N)\in\R_+^N\]
is called the {\it efficiency} of the relative cover. We define
$$\|\Gamma\| = \min_{1 \le j\le N}|\log \gamma_j|$$ and for $k
\in \N$, $$\|\Gamma\|_k =\left(\sum_{j=1}^N |\log
\gamma_j|^{-k}\right)^{-1/k}.$$
 \end{defn}

We note that, since $\Omega_2$ is relatively compact in
$\Omega_1$, there always
  exists a relative cover for $(\Omega_1,\Omega_2)$.

We are now able to prove the main result of this section.

\begin{theorem}
\label{embeddingtheorem}
  Let $d\geq 2$ and let $\Omega_1,\Omega_2\subset \R^d$ be open with $\Omega_2\cc
  \Omega_1$. Suppose that $\{B_n\}_{1\le n\le N}$ is a relative
  cover of $(\Omega_1,\Omega_2)$ of size $N$ with efficiency
$\Gamma$. Then the canonical
identification
\[ J:h^2(\Omega_1)\to h^2(\Omega_2) \]
satisfies
\[
J\in E(c,1/(d-1)), \quad \text{where}\quad
c=\left(\frac{(d-1)!}{2}\right )^{1/(d-1)}\norm{\Gamma}_{(d-1)},
\]
and
\begin{equation}
\label{jcguage} |J|_{c,1/(d-1)}\leq N \exp( -\norm{\Gamma}/2).
\end{equation}
\end{theorem}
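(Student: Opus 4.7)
The plan is to decompose $J$ into a sum of operators, each of which factors through an embedding between concentric balls whose singular value behaviour is controlled by Proposition~\ref{ballasymp}, and then reassemble using the operator-ideal properties in Proposition~\ref{prop2}. The estimate on $c$ should drop out of the $\ell^{(d-1)}$-type sum formula in Proposition~\ref{prop2}(ii), which is precisely why the norm $\|\Gamma\|_{(d-1)}$ appears in the statement.

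Concretely, set $W := \bigcup_{n=1}^N B_n$, so that $\Omega_2 \subset W \subset \Omega_1$ and $J$ factors as $J = \iota \circ J'$, where $J' : h^2(\Omega_1) \to h^2(W)$ and $\iota : h^2(W) \to h^2(\Omega_2)$ are canonical identifications, with $\|\iota\| \le 1$. Since each ball has Lebesgue null boundary, a disjointification $\{\widetilde{B}_n\}$ exists, and Proposition~\ref{berg:prop:top} applied with $\Omega = \Omega_1$, $\Omega_n = B_n$ yields
\[ J' = \sum_{n=1}^N T_{\widetilde{B}_n} J_n, \]
where $J_n : h^2(\Omega_1) \to h^2(B_n)$ is the canonical identification and $\|T_{\widetilde{B}_n}\| \le 1$ by Lemma~\ref{berg:lem3}.

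Next I would handle the individual summands. Because $B_n(\gamma_n) \subset \Omega_1$, each $J_n$ further factors as $J_n = K_n R_n$, with $R_n : h^2(\Omega_1) \to h^2(B_n(\gamma_n))$ a contractive canonical identification and $K_n : h^2(B_n(\gamma_n)) \to h^2(B_n)$ the concentric ball embedding. Proposition~\ref{ballasymp} gives $K_n \in E(c_n, 1/(d-1))$ with $c_n = ((d-1)!/2)^{1/(d-1)} \log\gamma_n$ and $|K_n|_{c_n,1/(d-1)} = \gamma_n^{-1/2}$. Proposition~\ref{prop2}(i) then yields $T_{\widetilde{B}_n} J_n \in E(c_n, 1/(d-1))$ with gauge at most $\gamma_n^{-1/2}$. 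Feeding this into Proposition~\ref{prop2}(ii) with $\alpha = 1/(d-1)$ produces $J' \in E(c', 1/(d-1))$ where
\[ c' = \Bigl(\sum_{n=1}^N c_n^{-(d-1)}\Bigr)^{-1/(d-1)} = \Bigl(\frac{(d-1)!}{2}\Bigr)^{1/(d-1)} \|\Gamma\|_{(d-1)} = c, \]
and $|J'|_{c,1/(d-1)} \le N \max_n \gamma_n^{-1/2} = N\exp(-\|\Gamma\|/2)$. One last application of Proposition~\ref{prop2}(i) to $J = \iota J'$ gives the claimed bound.

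There is no genuine obstacle here; the argument is essentially a bookkeeping exercise combining Propositions~\ref{berg:prop:top},~\ref{ballasymp}, and~\ref{prop2}. The only step needing care is the algebraic reconciliation of $c'$ with the expression in the statement: the exponent conversion $\alpha = 1/(d-1)$ turns the $(-1/\alpha)$-power sum from Proposition~\ref{prop2}(ii) into a $(d-1)$-power sum over $\log\gamma_n$, which combines with the $((d-1)!/2)^{1/(d-1)}$ factor embedded in each $c_n$ to yield exactly $((d-1)!/2)^{1/(d-1)} \|\Gamma\|_{(d-1)}$. The appeal to Proposition~\ref{berg:prop:top} is legitimate because balls have null boundary so a disjointification is guaranteed.
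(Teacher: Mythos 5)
Your proof is correct and follows essentially the same route as the paper: apply Proposition~\ref{berg:prop:top} to decompose the restriction into the union of balls, factor each summand through the concentric-ball embedding $h^2(B_n(\gamma_n))\hookrightarrow h^2(B_n)$ to invoke Proposition~\ref{ballasymp}, absorb the contractive pieces via Proposition~\ref{prop2}(i), and combine via Proposition~\ref{prop2}(ii) to get the stated $c$ and gauge. The only cosmetic difference is that you split off the final contraction $\iota:h^2(\bigcup B_n)\to h^2(\Omega_2)$ at the end, whereas the paper writes it inside each summand at once; the computation is otherwise identical.
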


\begin{proof}
Suppose that $\Gamma=(\log\gamma_1,\ldots,\log\gamma_N)$, where
$(\gamma_1,\ldots,\gamma_N)$ is a scaling of
$\{B_n\}_{1\le n\le N}$.
  Let $\{\widetilde\Omega_n\}_{1\leq n\leq N}$ be a disjointification
  of $\{B_n\}_{1\le n\le N}$, and let
\[ T_{\widetilde\Omega_n}:h^2(B_n)
\rightarrow h^2\left(\bigcup_{n=1}^NB_n\right)
\quad (1\leq n\leq N)\] denote the operator defined in Lemma
\ref{berg:lem3}.  Consider the following canonical identifications:
\[ \widetilde J_n:h^2(\Omega_1)\to h^2(B_n(\gamma_n))
\quad (1\leq n \leq N), \]
\[ J_n:h^2(B_n(\gamma_n))
\hookrightarrow h^2(B_n) \quad (1\leq n \leq N), \]
\[ \widetilde J:h^2
\left(\bigcup_{n=1}^NB_n\right)\to h^2(\Omega_2). \]
By Proposition \ref{berg:prop:top} we have
\begin{equation}
\label{bigfactorization}
 J=\sum_{n=1}^N\widetilde J T_{\widetilde\Omega_n}J_n\widetilde
J_n\,.
\end{equation}

Since $\|\widetilde J\|\leq 1$ and $\|\widetilde J_n\|\leq 1$,
while $\norm{T_{\widetilde\Omega_n}}{}\leq 1$ by Lemma
\ref{berg:lem3}, we conclude, by Propositions \ref{prop2} and
\ref{ballasymp}, that for $1 \le n \le N$,
\[ \widetilde J T_{\widetilde\Omega_n}J_n\widetilde J_n \in E(c_n,1/(d-1)),
\quad \text{with } c_n=\left(\frac{(d-1)!}{2}\right )^{1/(d-1)} \log\gamma_n
\]
and
\[
|\widetilde J T_{\widetilde\Omega_n}J_n\widetilde J_n|_{c_n,1/(d-1)}\leq
\exp( -(1/2)\log\gamma_n ).
 \]
 The assertion now follows from
Proposition~\ref{prop2}.
\end{proof}

\section{Bounded operators on spaces of harmonic functions}

We are now able to prove the main result which gives explicit upper bounds
for the eigenvalues of bounded operators on the space $h(\Omega)$ of
harmonic functions on an open set $\Omega\subset\R^d$. In order to
specify a topology on $h(\Omega)$ we define, for each $\Omega'\subset \R^d$
with $\Omega'\cc\Omega$, the following seminorm on $h(\Omega)$
\[ p_{\Omega'}(f):=\sqrt{ \int_{\Omega'}\abs{f(x)}^2 dx}\,. \]
If $\set{\Omega_n}_{n\in\N}$ is a collection of open subsets of $\R^d$
such that
\begin{itemize}
\item[(i)] $\Omega_n\cc\Omega_{n+1} \text{ for every $n\in\N$}$,
\item[(ii)] $\bigcup_{n\in\N}\Omega_n=\Omega$,
\end{itemize}
then $\{p_{\Omega_n}\}$ forms a directed system of seminorms which
turns $h(\Omega)$ into a Fr\'echet space, whose topology is equivalent 
to the topology of uniform
convergence on compact subsets of $\Omega$. Moreover, since each
canonical identification $h^2(\Omega_{n+1})\to h^2(\Omega_n)$ is
nuclear by Theorem~\ref{embeddingtheorem}, the space $h(\Omega)$ is
nuclear. 

A study of other kinds of harmonic function spaces can be
found in \cite{chu}.

Recall that a subset $S$ of a topological vector
space $E$ is {\it bounded} if for each neighbourhood $U$ of $0$,
we have $S \subset \alpha U$ for some $\alpha > 0$. A linear
operator $L:E\to E$ is {\it bounded} if it takes a neighbourhood
of
  zero into a bounded set. In order to formulate the main result, we
  require the following definition.
  \begin{defn}
    Let $\Omega, \Omega'\subset \R^d$ be open with $\Omega'\cc \Omega$. A
    linear operator $L:h(\Omega)\to h(\Omega)$ is called
    \textit{$\Omega'$-bounded} if for every $\Omega''\cc\Omega$ there
    is a positive constant $k$ such that
\[ p_{\Omega''}(Lf)\leq k p_{\Omega'}(f) \quad \text{ for every $f\in
    h(\Omega)$}\,. \]
 \end{defn}
Clearly, a linear operator $L:h(\Omega)\to h(\Omega)$ is bounded if
and only if it is $\Omega'$-bounded for some $\Omega'\cc\Omega$.

We shall now discuss some natural examples of bounded operators on
$h(\Omega)$ for plane domains $\Omega$. We identify the complex
plane $\Bbb C$ with ${\Bbb R}^2$. By a {\it conformal map} on
$\Omega$ we mean a holomorphic map $\varphi: \Omega \to
\Bbb C$ such that the derivative $\varphi'$ has no zero, in which
case, the differential
$$ d\varphi (z) : \R^2 \to \R^2$$
at each $z \in \Omega$ is a linear isomorphism since $\det
d\varphi(z) = |\varphi'(z)|^2 \neq 0$. Hence $\varphi$ is a local
diffeomorphism on $\Omega$. Let $\Pi =\{z \in {\Bbb R}^2: \Im\,z >0
\}$ be the upper half-plane in ${\Bbb R}^2$. We can define a
conformal mapping from $\Pi$ onto a proper region $\Omega_2 \cc
\Pi$. For instance, the conformal map
$$\psi : z \in U \mapsto \left(\frac{1+z}{1-z}\right)^2 \in \Pi$$
sends the semicircular disc $U \subset \Pi$, centred at $0$ with
radius $1$, onto $\Pi$. Note that $\psi$ is one-to-one on U. Hence
the conformal map $\varphi =\psi^{-1} + 2i$ sends $\Pi$ onto the
translation $\Omega_2 = U + 2i$ of $U$ with $\Omega_2 \cc \Pi$.
Another example is the Schwarz-Christoffel transformation $$ \varphi
(z) = \int_0^z \frac{dt}{\sqrt{(1-t^2)(1-t^2/4)}} \qquad (z \in
\Omega)\,,$$ which maps $\Pi$ conformally onto a rectangle in $\Pi$ (cf.
\cite[p. 231]{a}).

\begin{ex}\label{operator}
Let $\Omega \subset {\Bbb R}^2$ be a domain and let $\varphi :
\Omega \to \Omega'$ be a conformal bijection whose image
$\Omega'$ satisfies $\Omega' \cc \Omega$. Since harmonic functions
on plane domains are real parts of holomorphic functions, one can
define a composition operator $L_\varphi : h(\Omega)
\to h(\Omega)$  by
$$L_\varphi (f) = f \circ \varphi \qquad (f \in h(\Omega)).$$
Then $L_\varphi$ is $\Omega'$-bounded. To see this, let $\Omega''$ be
open with $\Omega'' \cc \Omega$. Then we have, via a change of variable,
\begin{eqnarray*}
p_{\Omega''}(L_\varphi f)^2 & = & \int_{\Omega''} |f\circ \varphi
(x)|^2\,dx
 =  \int_{\varphi(\Omega'')} |f (x)|^2\,|\det\, d \varphi^{-1}(x)|\,dx\\
& \leq & k\int_{\varphi(\Omega'')} |f (x)|^2 \,dx \\
& \leq & k p_{\Omega'}(f)^2\,,
\end{eqnarray*}
where $k=\sup_{x\in\varphi(\Omega'')}|\det d\varphi^{-1}(x)|<\infty$. 
Thus $L_\varphi$ is $\Omega'$-bounded. 

More generally, if $\varphi:\Omega\to \Omega'$ is any conformal map
between plane domains with $\Omega'\cc\Omega$, then a local change of
variables together with a compactness argument shows
that $L_\varphi$ is bounded in this case as well.  
\end{ex}

For open sets $\Omega$ in Euclidean space $\R^d$ of dimension
greater than $2$, one can construct bounded composition operators
$L_\varphi$ on $h(\Omega)$ analogous to the above example, but the
choice of $\varphi: \Omega \to \Omega$ is more delicate.
A smooth map $\varphi: \Omega \to \Omega$ for which the
composition operator $L_\varphi : h(\Omega) \to
h(\Omega)$ is well-defined is called a {\it harmonic morphism}.
Harmonic morphisms between Riemannian manifolds have been
characterized and widely studied. We refer to \cite{bw} for details
and examples.

 We are now ready to prove the main result.
\begin{theorem}
\label{maintheorem}
Let $\Omega$ be open in $\R^d$ $(d >1)$ and
let
   $L:h(\Omega)\to h(\Omega)$ be $\Omega'$-bounded for some
   $\Omega'\cc\Omega$.
If $\Omega''$ is open with
\[ \Omega'\cc\Omega''\cc\Omega \]
and such that $(\Omega'',\Omega')$ has a relative cover of size $N$
   and efficiency $\Gamma$, then
\[
\lam(L)\in {\cal E}(c,1/(d-1))\quad \text{with
$\abs{\lambda(L)}_{c,1/(d-1)}\leq KN\exp(-\|\Gamma\|/2)$,}
\]
where
\[ c=\frac{d-1}{d}\left (\frac{(d-1)!}{2}
  \right )^{1/(d-1)}\|\Gamma\|_{(d-1)} \]
and
\[ K=\sup\all{p_{\Omega''}(Lf)}{f\in h(\Omega),\,p_{\Omega'}(f)\leq 1}\,.\]
\end{theorem}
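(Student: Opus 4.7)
The plan is to reduce the eigenvalue problem for $L$ on the nuclear Fréchet space $h(\Omega)$ to an eigenvalue problem for a single compact Hilbert-space operator $M$ on $h^2(\Omega'')$ whose singular values are controlled by Theorem~\ref{embeddingtheorem}; the target bound will then fall out of Proposition~\ref{prop2}(iii).

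\emph{Construction of the auxiliary operator.} The $\Omega'$-boundedness of $L$ yields $p_{\Omega''}(Lf) \leq K\, p_{\Omega'}(f)$ for every $f \in h(\Omega)$, with $K$ as in the statement. Hence the prescription $f|_{\Omega'} \mapsto (Lf)|_{\Omega''}$ is well defined and bounded by $K$ on the subspace of restrictions $\{f|_{\Omega'}: f\in h(\Omega)\}\subset h^2(\Omega')$, and I would extend it by continuity (and by zero on the orthogonal complement) to a bounded operator
\[
\tilde L \colon h^2(\Omega') \to h^2(\Omega''), \qquad \|\tilde L\| \le K.
\]
Writing $J \colon h^2(\Omega'') \hookrightarrow h^2(\Omega')$ for the canonical identification (legitimate because $\Omega'\cc\Omega''$), I would introduce the compact operator
\[
M := \tilde L \circ J \colon h^2(\Omega'') \to h^2(\Omega''),
\]
for which the construction guarantees the key intertwining identity
\[
M(f|_{\Omega''}) = \tilde L(f|_{\Omega'}) = (Lf)|_{\Omega''} \qquad (f\in h(\Omega)).
\]

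\emph{Gauge estimate for $M$.} By Theorem~\ref{embeddingtheorem}, $J \in E(c_0, 1/(d-1))$ with $c_0 = \left(\frac{(d-1)!}{2}\right)^{1/(d-1)}\|\Gamma\|_{(d-1)}$ and $|J|_{c_0,1/(d-1)} \le N\exp(-\|\Gamma\|/2)$. Proposition~\ref{prop2}(i) then places $M$ in $E(c_0, 1/(d-1))$ with $|M|_{c_0,1/(d-1)} \le K N \exp(-\|\Gamma\|/2)$, and Proposition~\ref{prop2}(iii) applied with $\alpha = 1/(d-1)$, so that $a/(1+\alpha) = a(d-1)/d$, gives
\[
\lambda(M) \in \mathcal{E}(c, 1/(d-1)) \quad \text{with} \quad |\lambda(M)|_{c,1/(d-1)} \le K N \exp(-\|\Gamma\|/2),
\]
where $c = \frac{d-1}{d}\, c_0$ matches the constant in the statement.

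\emph{Eigenvalue comparison — the main obstacle.} It remains to show $|\lambda_n(L)| \le |\lambda_n(M)|$ for every $n \in \N$. The continuous restriction map $\rho \colon h(\Omega) \to h^2(\Omega'')$, $\rho(f) = f|_{\Omega''}$, intertwines the two operators, $\rho L = M \rho$, by the identity above. The heart of the argument is to prove that $\rho$ is injective on every generalised eigenspace $\ker(L-\lambda)^n$ with $\lambda \neq 0$; I expect this to be the main technical difficulty. For $n=1$, if $Lf = \lambda f$ and $f|_{\Omega''}=0$, then $f|_{\Omega'}=0$, so $\Omega'$-boundedness applied to every $\Omega'''\cc\Omega$ forces $p_{\Omega'''}(Lf)=0$; hence $Lf=0$ in $h(\Omega)$ and $f=0$ because $\lambda\neq 0$. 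A routine induction on $n$, applied to $g=(L-\lambda)^{n-1}f$, extends the conclusion to all generalised eigenvectors. Injectivity implies that the algebraic multiplicity of every non-zero eigenvalue of $L$ is majorised by the corresponding multiplicity for $M$, so the decreasing-magnitude sequence $(|\lambda_n(L)|)_n$ is dominated term by term by $(|\lambda_n(M)|)_n$; combined with the previous gauge bound this produces $|\lambda(L)|_{c,1/(d-1)}\le K N \exp(-\|\Gamma\|/2)$, which is precisely the assertion.
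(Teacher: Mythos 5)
Your proposal builds essentially the same auxiliary Hilbert-space operator as the paper: your $\tilde L\circ J$ is exactly the paper's $J_1\widetilde L P J_2$, where $J_1:h(\Omega)\to h^2(\Omega'')$, $J_2:h^2(\Omega'')\to h^2(\Omega')$ are the canonical identifications and $P$ is the orthogonal projection onto $\overline{J_2J_1h(\Omega)}$. The gauge estimate via Theorem~\ref{embeddingtheorem} and Proposition~\ref{prop2} also coincides with the paper's. Where you diverge is the transfer of the eigenvalue estimate from the Hilbert-space operator $M$ back to $L$. The paper invokes Pietsch's principle of related operators (the citation \cite{pietsch}), which in one stroke gives the \emph{equality} $\lambda(L)=\lambda(J_1\widetilde L P J_2)$ of full eigenvalue sequences, including multiplicities and including the fact that the nonzero spectrum of $L$ consists precisely of eigenvalues of finite algebraic multiplicity. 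You instead prove the \emph{inequality} $\abs{\lambda_n(L)}\le\abs{\lambda_n(M)}$ by a hands-on argument: the restriction map $\rho$ intertwines $L$ and $M$ and is injective on each generalised eigenspace $\ker(L-\lambda)^n$ for $\lambda\neq 0$, so the multiplicities are majorised. Your injectivity argument is correct (the inductive step via $g=(L-\lambda)^{n-1}f$ uses $\rho(L-\lambda)^{n-1}=(M-\lambda)^{n-1}\rho$ to get $\rho g=0$, then the $n=1$ case), and the multiplicity domination does yield term-by-term domination of the decreasingly ordered sequences.

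However, there is a small but real gap. Your comparison argument only controls the \emph{point} spectrum of $L$: you show that every nonzero eigenvalue of $L$ is a nonzero eigenvalue of $M$ with at least the same multiplicity, but you have not argued that the nonzero spectrum of $L$ consists only of eigenvalues, nor that the eigenvalue sequence $\lambda(L)$ (as used in the statement) is well defined in the sense of the paper's conventions. A bounded operator on a general Fr\'echet space can a priori have residual or continuous spectrum away from $0$. To close the gap one has to invoke, at some point, the Grothendieck--Pietsch Fredholm/Riesz theory for nuclear operators on nuclear spaces (or, equivalently, the principle of related operators, which is what the paper cites): this is what guarantees that $\sigma(L)\setminus\{0\}$ is a discrete set of eigenvalues of finite algebraic multiplicity accumulating only at $0$, so that $\lambda(L)$ exists and your multiplicity comparison can be translated into the stated gauge bound. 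Once that is granted, your more elementary eigenspace argument does the rest, and is an attractive alternative to citing Pietsch wholesale; but as written the step ``the decreasing-magnitude sequence $(\abs{\lambda_n(L)})_n$ is dominated term by term'' presupposes that this sequence exhausts the nonzero spectrum, which your argument has not established.
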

\begin{proof}
Define the following canonical identifications
\[ J_1:h(\Omega)\to h^2(\Omega''), \]
\[ J_2:h^2(\Omega'')\to h^2(\Omega'). \]
Clearly, $J_1$ and $J_2$ are continuous. Let
$\overline{J_2J_1h(\Omega)}$ be the closure of $J_2J_1h(\Omega)$
in the Hilbert space $h^2(\Omega')$ and let $P: h^2(\Omega')
\to \overline{J_2J_1h(\Omega)}$ be the natural
projection. Since $L$ is $\Omega'$-bounded,
the linear map $$f|_{\Omega'} \in
J_2J_1h(\Omega) \mapsto Lf \in h(\Omega) \qquad (f \in
h(\Omega))$$ is well defined and bounded, and therefore extends to
a bounded linear map
\[  \widetilde L : \overline{J_2J_1h(\Omega)} \to h(\Omega). \]
We now observe that $L$ admits the following factorisation
\[ L=\widetilde{L}PJ_2J_1. \]
By Pietsch's principle of related operators
(see \cite[Satz 1 and Satz
2]{pietsch}) it follows that
\[\lam(L) =  \lam(\widetilde{L}PJ_2J_1)=\lam(J_1\widetilde{L}P J_2)\,. \]
But since
$J_1\widetilde{L}P :h^2(\Omega')\to h^2(\Omega'')$ is
bounded with norm $K$ and since, by Theorem~\ref{embeddingtheorem},
we have $J_2\in E(c',1/(d-1))$ with
$|J_2|_{c',1/(d-1)}\leq N\exp(-\|\Gamma\|/2)$,
where
\[ c'=\left ( \frac{(d-1)!}{2} \right ) ^{1/(d-1)}\|\Gamma\|_{(d-1)}\,,\]
it follows by Proposition~\ref{prop2} that
$\lambda(J_1\widetilde{L}PJ_2)\in {\cal E}((d-1)c'/d,1/(d-1))$ with
$ |J_1\widetilde{L}PJ_2|_{(d-1)c'/d,1/(d-1)}\leq
KN\exp(-\|\Gamma\|/2)$. Thus $\lambda(L)$ has the desired properties.
\end{proof}
An immediate consequence of the previous theorem is the following
analogue of Grothendieck's Remarque 9 mentioned in the introduction.
\begin{coro} Let $\Omega$ be open in $\R^d$ $(d >1)$ and
let
   $L:h(\Omega)\to h(\Omega)$ be a bounded linear operator.
   Then $\lam(L)\in {\cal
  E}(1/(d-1))$.
\end{coro}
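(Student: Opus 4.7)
The plan is to reduce the corollary directly to Theorem~\ref{maintheorem}; essentially all the hard work has already been done there, so what remains is to verify that its hypotheses can always be met for an arbitrary bounded operator.

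First I would translate the abstract boundedness of $L$ into a concrete seminorm estimate. Since $\{p_{\Omega_n}\}$ is a fundamental system of seminorms on the Fr\'echet space $h(\Omega)$, the image under $L$ of some basic neighbourhood of $0$ is bounded; by the remark immediately following the definition of $\Omega'$-boundedness, this is equivalent to saying that $L$ is $\Omega'$-bounded for some $\Omega'\cc\Omega$. Fix such an $\Omega'$.

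Next I would produce the geometric data required by Theorem~\ref{maintheorem}. Since $\overline{\Omega'}$ is compact in $\Omega$, I can choose an open $\Omega''$ with $\Omega'\cc\Omega''\cc\Omega$, for instance by taking $\Omega''$ to be a small open neighbourhood of $\overline{\Omega'}$ whose closure is still compactly contained in $\Omega$. To build a relative cover of $(\Omega'',\Omega')$, for each $x\in\overline{\Omega'}$ pick a ball $B_x$ centred at $x$ so small that some dilation $B_x(\gamma_x)$ with $\gamma_x>1$ still lies inside $\Omega''$; the existence of such a $\gamma_x$ follows from $\mathrm{dist}(\overline{\Omega'},\partial\Omega'')>0$. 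Extracting a finite subcover $B_1,\dots,B_N$ of $\overline{\Omega'}$ (possibly shrinking the $\gamma_n$ so that the dilated balls still sit in $\Omega''$) yields a relative cover in the sense of Definition~\ref{cover}, with some size $N$ and efficiency $\Gamma\in\R_+^N$.

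Finally, Theorem~\ref{maintheorem} applies to this configuration and delivers
\[
\lambda(L)\in\mathcal{E}\bigl(c,1/(d-1)\bigr)
\quad\text{with}\quad
c=\tfrac{d-1}{d}\bigl(\tfrac{(d-1)!}{2}\bigr)^{1/(d-1)}\|\Gamma\|_{(d-1)}>0,
\]
and since $\mathcal{E}(c,1/(d-1))\subset\mathcal{E}(1/(d-1))$ by definition of the latter as the union over all positive parameters, the conclusion $\lambda(L)\in\mathcal{E}(1/(d-1))$ follows. There is no real obstacle; the only thing one must be a little careful about is arranging the dilated balls $B_n(\gamma_n)$ to all lie inside $\Omega''$ simultaneously, which is the content of condition~(b) of Definition~\ref{cover} and is handled by the compactness argument above.
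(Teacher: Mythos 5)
Your proposal is correct and coincides with the paper's (implicit) proof: the authors simply record the corollary as an immediate consequence of Theorem~\ref{maintheorem}, using exactly the facts you spell out, namely that boundedness of $L$ is equivalent to $\Omega'$-boundedness for some $\Omega'\cc\Omega$, that one can interpolate an $\Omega''$ with $\Omega'\cc\Omega''\cc\Omega$ admitting a relative cover of $(\Omega'',\Omega')$ (a point the paper already notes after Definition~\ref{cover}), and that $\mathcal{E}(c,1/(d-1))\subset\mathcal{E}(1/(d-1))$ for every $c>0$. You have merely unpacked the ``immediate'' in the same order the paper intends.
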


\begin{ex} As in the discussion before Example \ref{operator},
let $\varphi : \Pi \to \Omega_2$ be the one-to-one
conformal map from the upper half-plane $\Pi$ onto the semicircular
disc $\Omega_2$, centred at the point $2i$ with radius $1$. Consider
the composition operator $L_\varphi : h(\Pi) \to h(\Pi)$
defined in Example \ref{operator}. Let $\Omega_1$ be the open
disc $B_{\gamma,2i}$ centred at the point $2i$ with radius $1<
\gamma <2$. Then we have $\Omega_2 \cc \Omega_1 \cc \Pi$ while
$B_{2, 2i} \not\cc \Pi$. By Definition \ref{cover}, the singleton
$\{B_{1,2i}\}$ is a relative cover of $(\Omega_1, \Omega_2)$ with
{\it optimal} scaling $\gamma$, and efficiency $\Gamma =
\log\,\gamma$. We have $d-1=1$ for  $\Pi \subset \R^2$. Hence
$\|\Gamma\|= \log\,\gamma = \|\Gamma\|_1$ and $c =
\frac{1}{4}\log\,\gamma$, as in Theorem \ref{maintheorem} which
gives the following eigenvalue asymptotics
$$|\lambda_n (L_\varphi)| \leq K\exp(-(\log \gamma)/2)\exp(-(\log \gamma)n/4)$$
where
$$K = \sup_{z\in\varphi(B_{\gamma,2i})} \sqrt{|\det\,
  d\varphi^{-1}(z)|}
=\sup_{z\in \varphi(B_{\gamma,2i})}\abs{\frac{4-8i+z}{(1+2i-z)^3}}
\,.$$
In particular, we see that 
$$\lambda _n (L_\varphi) = O(\gamma^{-n/4})$$
for every $\gamma<2$. 
\end{ex}

\section{Acknowledgement}
We would like to thank an anonymous referee for useful comments and suggestions
that helped to
improve the presentation of this article.

\end{document}